\documentclass[reqnt]{amsart}
\usepackage{amsmath,amsthm,amssymb,amscd,float}
\usepackage{subcaption}
\usepackage[shortlabels]{enumitem}
\usepackage{youngtab}
\usepackage{tikz}
\usepackage{xcolor}

\newtheorem{theorem}{Theorem}[section]
\newtheorem{lemma}[theorem]{Lemma}

\theoremstyle{definition}
\newtheorem{definition}[theorem]{Definition}
\newtheorem{example}[theorem]{Example}

\numberwithin{equation}{section}
\allowdisplaybreaks
\begin{document}

%
%
%
%
%
%
%
%
%

\title[$J$-generalization of the Rogers-Ramanujan-Gordon identities]
 {$J$-generalization of the Rogers-Ramanujan-Gordon identities via commutative algebra}
 
 \author[Alapan Ghosh]{Alapan Ghosh}
 \address{Department of Mathematics, Indian Institute of Technology Guwahati, Assam, India, PIN- 781039}
 \email{alapan.ghosh@iitg.ac.in}
 
 \author[Rupam Barman]{Rupam Barman}
 \address{Department of Mathematics, Indian Institute of Technology Guwahati, Assam, India, PIN- 781039}
 \email{rupam@iitg.ac.in}

\date{April 23, 2026}

\subjclass[2010]{11P81, 11P84, 13D40, 13A02}

\keywords{Rogers-Ramanujan-Gordon identities, partition identities, Hilbert-Poincar\'e series}

\begin{abstract} 
The Rogers-Ramanujan-Gordon identities generalize the classical partition identities discovered independently by L. J. Rogers and S. Ramanujan. In 2021, Afsharijoo provided a commutative algebra proof of the Rogers-Ramanujan-Gordon identities. Building on the Afsharijoo's approach, we present a commutative algebra proof of a broader family of identities introduced by Coulson \textit{et al.}, which includes the Rogers-Ramanujan-Gordon identities as a special case. In the proof, we relate the generating functions associated with these identities to the Hilbert-Poincar\'e series of suitably constructed graded algebras.
\end{abstract}

\maketitle
\section{Introduction} 
A partition of a positive integer $n$ is a finite sequence of non-increasing positive integers $\lambda=(\lambda_1, \lambda_2, \ldots, \lambda_s)$ such that $\lambda_1+\lambda_2+\cdots +\lambda_s=n$. The integers $\lambda_j$ are called the \emph{parts} of the partition $\lambda$. Let $p(n)$ denote the number of partitions of $n$, with the convention that $p(0):=1$. 
\par In 1748, Leonhard Euler \cite{Euler_1748} discovered a fundamental and elegant partition identity asserting that the number of partitions of an integer $n$ into odd parts is equal to the number of partitions of $n$ into distinct parts. Since then, numerous remarkable partition identities have been established (see, for example, \cite{Andrews_1998}). In 1894, Leonard James Rogers \cite{Rogers_1894} discovered certain partition identities that went largely unnoticed at that time. These identities were later rediscovered by Srinivasa Ramanujan, whose work brought them to the attention of the mathematical community; they are now known as the Rogers-Ramanujan identities:
\begin{align*}
1+\sum_{n=1}^{\infty}\frac{q^{n^2}}{(1-q)\cdots (1-q^n)}=\prod_{n=0}^{\infty}\frac{1}{(1-q^{5n+1})(1-q^{5n+4})},\\
1+\sum_{n=1}^{\infty}\frac{q^{n^2+n}}{(1-q)\cdots (1-q^n)}=\prod_{n=0}^{\infty}\frac{1}{(1-q^{5n+2})(1-q^{5n+3})}.
\end{align*}
The Rogers-Ramanujan identities have been related to a large number of different areas of mathematics. These identities admit elegant partition-theoretic interpretations. They were first recognized by MacMahon \cite{MacMahon} and Schur \cite{Schur} as identities for integer partitions. They have also been linked to algebraic geometry \cite{Afsharijoo_2026, Mourtada_2013, Mourtada_2025}, modular forms \cite{Ono_2008a, Ono_2008b}, transcendental number theory \cite{Richmond_1981},  group theory \cite{Fulman_2000}, statistical mechanics \cite{Andrews-Baxter_1984, Baxter_1981} and many more. The first Rogers-Ramanujan identity states that the number of partitions of an integer $n$ in which the difference between any two parts is at least $2$ is equal to the number of partitions of $n$ into parts congruent to $1$ or $4$ modulo $5$. The second Rogers-Ramanujan identity asserts that the number of partitions of $n$ in which each part exceeds $1$ and the difference between any two parts is at least $2$ is equal to the number of partitions of $n$ into parts congruent to $2$ or $3$ modulo $5$.
\par Gordon \cite{Gordon_1961} generalized the Rogers-Ramanujan identities and provided a combinatorial proof; these results are now known as the Rogers-Ramanujan-Gordon identities, Gordon's identities, or Gordon's theorem. Andrews  \cite{Andrews_1974} later gave an analytic formulation, which is commonly referred to as the Andrews-Gordon identities. In \cite{Ono}, Griffin \textit{et al.} developed a broad mathematical framework of Rogers-Ramanujan-type identities which gave new connections between Lie algebras and the theory of modular functions.
\par We now recall the Rogers-Ramanujan-Gordon identities. Let $r$ and $i$ be positive integers with $1\leq i\leq r$. Let $A_{r,i}(n)$ denote the number of partitions of $n$ into parts which are not congruent to $0$ or $\pm i$ modulo $2r+1$. Let $B_{r,i}(n)$ denote the number of partitions $(\lambda_1,\lambda_2,\ldots,\lambda_s)$ of $n$ satisfying the following conditions:
\begin{enumerate}[1.]
\item $\lambda_m-\lambda_{m+r-1}\geq2$ and
\item at most $i-1$ parts are equal to $1$.
\end{enumerate} 
\begin{theorem}[Rogers-Ramanujan-Gordon identities]\label{Thm1}
Let $r$ and $i$ be positive integers with $1\leq i\leq r$. Then 
$$A_{r,i}(n)=B_{r,i}(n), ~~ \text{for ~all} ~n\geq0.$$
\end{theorem}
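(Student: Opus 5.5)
We would prove Theorem~\ref{Thm1} in the spirit of Afsharijoo's commutative-algebra approach: realize the generating function of the $B_{r,i}(n)$ as a Hilbert--Poincar\'e series, and then identify that series with the product side. Concretely, set $S=\mathbb{K}[x_1,x_2,\ldots]$ with $x_j$ of weight $j$, so that a monomial $x_{\lambda_1}\cdots x_{\lambda_s}$ corresponds to a partition. Take the ideal
\[
I_{r,i}=\bigl(x_1^{i}\bigr)+\bigl(x_j^{a}x_{j+1}^{b}\;:\;a+b=r,\ j\geq 1\bigr).
\]
A partition avoids every such monomial exactly when it has at most $i-1$ ones and $\lambda_m-\lambda_{m+r-1}\geq 2$. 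Indeed, $r$ parts lying in a window $\{j,j+1\}$ is precisely the failure of the difference condition. Hence the monomials outside $I_{r,i}$ are exactly the partitions counted by $B_{r,i}$, and
\[
HP_{S/I_{r,i}}(q)=\sum_{n\ge 0}B_{r,i}(n)q^n .
\]

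Since $I_{r,i}$ is a monomial ideal, this reduction is immediate. The work lies in computing the series. We would derive the Andrews--Gordon recursion
\[
G_{r,i}-G_{r,i-1}=q^{i-1}G_{r,r-i+1}(q)
\]
(after the shift $x_j\mapsto x_{j+1}$), with $G_{r,0}=0$, by means of short exact sequences of graded modules. The basic sequence is
\[
0\to S/(I_{r,i}:x_1^{i-1})(-(i-1))\xrightarrow{\cdot x_1^{i-1}} S/I_{r,i}\to S/(I_{r,i}+(x_1^{i-1}))\to 0,
\]
using $I_{r,i}+(x_1^{i-1})=I_{r,i-1}$. The colon ideal should be $(x_1)+(x_2^{r-i+1})+(\text{shifted } I)$. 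Quotienting by $x_1$ and applying the shift endomorphism $x_j\mapsto x_{j+1}$ identifies this with $S/I_{r,r-i+1}$ with weights raised by $1$ per variable. The extra contribution of $q^{\text{number of parts}}$ forces us to work bigraded, tracking both weight and the number of parts. The recursion then becomes
\[
J_{r,i}(x)-J_{r,i-1}(x)=(xq)^{i-1}J_{r,r-i+1}(xq),
\]
and specializing $x=1$ recovers the series we want.

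Finally, the same recursion together with the initial conditions, taken as a $q$-difference system in $x$, has a unique power-series solution. Andrews' analytic series satisfies it, and we would quote that its specialization at $x=1$ equals, by the Jacobi triple product,
\[
\prod_{n\not\equiv 0,\pm i \ (\mathrm{mod}\ 2r+1)}(1-q^n)^{-1}=\sum_n A_{r,i}(n)q^n .
\]
This gives the identity.

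The main obstacle is the colon-ideal computation. We must show rigorously that $(I_{r,i}:x_1^{i-1})$ has exactly the claimed generators, which requires careful handling of the generators $x_1^ax_2^b$. We also need the shifted ideal to correspond precisely to $I_{r,r-i+1}$. We would first try to verify this directly on standard monomials, by showing a bijection on partitions: remove $i-1$ ones and then subtract $1$ from each remaining part. We would then upgrade this bijection to the exact sequence.
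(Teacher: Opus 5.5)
Your argument is correct, but only its first half matches the paper. The paper obtains Theorem 1 as the case $J=0$ of its main result. Your ideal $I_{r,i}$ is the paper's $P_{r,i,0}=P_1^{i}$. Your exact sequence, with $(I_{r,i}:x_1^{i-1})=(x_1)+\sigma(I_{r,r-i+1})$ where $\sigma$ is the shift $x_j\mapsto x_{j+1}$, gives after specialization exactly the recursion of Lemma 2.1, which the paper takes from Afsharijoo without reproof. For a monomial ideal this colon computation is routine: divide each generator by its gcd with $x_1^{i-1}$. So it is not really the main obstacle.

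The two routes differ in how the shift is handled and how the product side enters. The paper stays singly graded and lets the smallest admissible part $k$ vary, using $\mathrm{HP}^k_\ell=\mathrm{HP}(S_k/P^\ell_k)$. Its relation $\mathrm{HP}^k_\ell-\mathrm{HP}^k_{\ell-1}=q^{k(\ell-1)}\mathrm{HP}^{k+1}_{r-\ell+1}$ is your bigraded equation evaluated at $x=q^{k-1}$. This is also the correct reading of your first one-variable display: its right-hand side must be the series for parts $\geq 2$. The paper then iterates this recursion and checks that the resulting coefficients satisfy the same recursion and initial values as those in the Lepowsky--Zhu/Coulson et al.\ ``motivated proof'' expansion of the product-side series $\mathcal{A}$. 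It concludes by a $q$-adic limit, using $\mathrm{HP}^d\to 1$ and the quoted fact $\mathcal{A}_{(r-1)d+1}\to 1$.

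You instead track the number of parts, which yields Andrews' $q$-difference system. You then close with uniqueness of its power-series solution, which holds by comparing coefficients of $x^n$: the constant term in $x$ is $1$, and $1-q^n$ is invertible for $n\geq 1$. The rest comes from Andrews' explicit series and the Jacobi triple product. Your proof is thus essentially Andrews' classical proof, with the combinatorial ``remove the $i-1$ ones, subtract $1$ from each part'' step recast as an exact sequence. All analytic content is outsourced to Andrews, just as the paper outsources it to the motivated-proof literature.

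Your version gives the refinement by number of parts. The paper's version gives the $J$-generalization at no extra cost, since one simply starts the iteration at $k=J+1$. In your framework, evaluating the bigraded series at $x=q^{J}$ does give $\mathcal{B}_{r,i,J}(q)$, but identifying it with $\mathcal{A}_{(r-1)J+\ell}$ would need further work.
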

The case $r=2$ of Theorem~\ref{Thm1} gives the classical Rogers-Ramanujan identities, while the case 
$r=1$ leads to the trivial identity $1=1$. Throughout this article, we assume $r\geq2$.
\par For $1\leq i\leq r$, define
\begin{align*}
\mathcal{A}_i(q):=\prod_{\substack{m\geq1,\\ m\not\equiv0,~\pm(r-i+1)\pmod{2r+1}}}\frac{1}{(1-q^m)}.
\end{align*}
Note that $\mathcal{A}_{r-i+1}(q)$ is the generating function for $A_{r,i}(n)$.
For $g\geq1$, the series $\mathcal{A}_{(r-1)g+i}(q)$ is defined recursively (see \cite[Section 2]{Lepowsky and Zhu 2012}) as follows: For $i=1$,
\begin{align*}
\mathcal{A}_{(r-1)g+1}(q)=\mathcal{A}_{(r-1)(g-1)+r}(q),
\end{align*}
and for $i=2,\ldots,r$,
\begin{align*}
\mathcal{A}_{(r-1)g+i}(q)=\frac{\mathcal{A}_{(r-1)(g-1)+r-i+1}(q)-\mathcal{A}_{(r-1)(g-1)+r-i+2}(q)}{q^{g(i-1)}}.
\end{align*}
Let $r$ and $i$ be positive integers with $1 \leq i \leq r$, and let $J \geq 0$ be an integer. Let $B_{r,i,J}(n)$ denote the number of partitions $(\lambda_1,\lambda_2,\ldots,\lambda_s)$ of $n$ satisfying the following conditions:
\begin{enumerate}[1.]
\item $\lambda_m-\lambda_{m+r-1}\geq2$,
\item all parts are greater than $J$, and
\item at most $i-1$ parts are equal to $J+1$.
\end{enumerate} 
Let $\mathcal{B}_{r,i,J}(q)$ denote the generating function of $B_{r,i,J}(n)$. The following theorem, due to Coulson \textit{et al.}~\cite{Coulson_2017}, establishes identities that generalize the Rogers-Ramanujan-Gordon identities in the sense that the first $J$ non-negative numbers do not appear as parts in the partition.
\begin{theorem}\cite[Theorem 8.8]{Coulson_2017}\label{Thm2}
For any nonnegative integer $J$ and $1\leq i\leq r$, we have
\begin{align*}
\mathcal{A}_{(r-1)J+\ell}(q)=\mathcal{B}_{r,i,J}(q),
\end{align*}
where $\ell=r-i+1$.
\end{theorem} 
Note that $B_{r,i,0}(n)=B_{r,i}(n)$. Thus, the case $J=0$ of Theorem \ref{Thm2} yields the Rogers-Ramanujan-Gordon identities. For $J \geq 1$, it is not known whether the series $\mathcal{A}_{(r-1)J+\ell}(q)$ admits a partition-theoretic interpretation, in contrast to the case $J = 0$, where $\mathcal{A}_{\ell}(q)$ is the generating function for the partition function $A_{r,i}(n)$, with $\ell = r - i + 1$.
\par 
The aim of this article is to present a commutative algebra proof of Theorem~\ref{Thm2}. In 2021, Afsharijoo \cite{Afsharijoo_2021} gave a commutative algebra proof of these identities corresponding to the $J=0$ case of Theorem~\ref{Thm2}. In this article, we extend Afsharijoo's approach to obtain a commutative algebra proof of Theorem~\ref{Thm2} in full generality.
\section{Preliminaries}\label{Section2}
In this section, we recall some definitions and results from commutative algebra and topology. For further details, see, for example, \cite{Atiyah_book, Eisenbud, Pfister_book}.
\begin{definition}[Graded ring]
A graded ring is a ring $A$ together with a family $(A_j)_{j \geq 0}$ of subgroups of the additive group of $A$, such that $A=\bigoplus_{j=0}^{\infty} A_j$ and $A_{j_1}A_{j_2} \subseteq A_{j_1+j_2}$ for all $j_1,j_2 \geq 0$.
\end{definition}
Here $A_0$ is a subring of $A$ and each $A_j$ is an $A_0$-module. For $j \geq 0$, $A_j$ is called the \emph{$j$-th homogeneous component} in the gradation of $A$. A nonzero element of $A_j$ is called a homogeneous element of degree $j$.
\begin{definition}[Homogeneous ideal]
An ideal $I$ of a graded ring $A$ is called homogeneous if it is generated by homogeneous elements.
\end{definition}
The intersection of a homogeneous ideal $I$ with $A_j$ is an $A_0$-submodule of $A_j$, called the homogeneous part of degree $j$ of $I$. A homogeneous ideal $I$ is the direct sum of its homogeneous parts $I_j=I \cap A_j$, i.e., $I=\bigoplus_{j=0}^{\infty} I_j$. If $I$ is a homogeneous ideal of a graded ring $A$, then the quotient ring $\frac{A}{I}$ is also a graded ring, decomposed as $$\frac{A}{I}=\bigoplus_{j=0}^{\infty} \frac{A_j}{I_j}.$$  
\begin{definition}[Graded $\mathbb{F}$-algebra]
Let $\mathbb{F}$ be a field. A graded ring $A=\bigoplus_{j=0}^{\infty} A_j$ is called a graded $\mathbb{F}$-algebra if it is also an $\mathbb{F}$-algebra, and $A_j$ is a vector space for all $j\geq0$ with $A_0=\mathbb{F}$.
\end{definition}
\begin{definition}[Weight of a polynomial]
The weight of the monomial $x_{i_1}^ {{\alpha}_1}\cdots x_{i_m}^{{\alpha}_m} \in \mathbb{F}[x_1,x_2,\ldots]$ is defined as $\sum_{k=1}^{m}i_k\alpha_k$. A polynomial $f(x) \in  \mathbb{F}[x_1,x_2,\ldots] $ is said to be a homogeneous polynomial of weight $a$ if every monomial of $f(x)$ has the same weight $a$.
\end{definition}
\begin{example}[Gradation by weight]\label{Example2}
Let $\mathbb{F}$ be a field of characteristic zero. Then $A:=\mathbb{F}[x_1,x_2,\ldots]$ is a graded algebra. $A$ is graded by weight, i.e., $A=\bigoplus_{j=0}^{\infty} A_j$, where $A_j$ is the set of polynomials of weight $j$ along with zero polynomial.
\end{example}
\begin{definition}[Hilbert-Poincar\'e series]
Let $\mathbb{F}$ be a field of characteristic zero and $A=\bigoplus_{j=0}^{\infty} A_j$ be a graded $\mathbb{F}$-algebra such that $\dim_{\mathbb{F}}(A_j)< \infty$. Then the Hilbert-Poincar\'e series of $A$ is $$\mathrm{HP}_A(q):=\sum_{j \geq 0}\dim_{\mathbb{F}}(A_j)q^j.$$
\end{definition} 
Next, we recall some facts about the Krull topology. For more details, see, for example \cite{Eisenbud}. Let $I$ be an ideal of ring $A$. The $I$-adic or Krull topology is a topology on $A$ in which a subset $U$ of $A$ is open if, for every $x \in U$, there exists $j \in \mathbb{N}$ such that $x+I^j \in U$. A sequence $(a_m)$ in $A$ converges to an element $a \in A$ if, for every $j \in \mathbb{N}$, there exists $N \in \mathbb{N}$ such that $(a_m-a) \in I^j$ for all $m \geq N$. In this article, we equip $A=\mathbb{F}[[q]]$ with 
the $I$-adic topology, where $I$ is an ideal generated by $q$ in $A$. We refer to this as the \emph{$q$-adic topology}.  
\section{A Proof of Theorem \ref{Thm2}}\label{Section3}
To prove the identities in Theorem \ref{Thm2} for $r \geq 2$, we first relate the generating function of $B_{r,i,J}(n)$ to the Hilbert-Poincar\'e series of a certain graded algebra. Let $\mathbb{F}$ be a field of characteristic zero, and consider the graded algebra $$S:=\mathbb{F}[x_1,x_2,x_3,\ldots],$$ graded by weight as in Example \ref{Example2}. For each $k \geq 1$, denote $$S_{k}:=\mathbb{F}[x_k,x_{k+1},x_{k+2},\ldots],$$ so that $S_1=S$. Let $(S_k)_j$ denote the homogeneous part of degree $j$ of the graded algebra $S_k$. Let $a$ and $t$ be integers. For $r\geq2$, $1\leq i\leq r$, and a fixed nonnegative integer $J$, we consider the ideal
\begin{align*}
&P_{r,i,J}:=\left(x_{J+1}^{i},~x_{J+1}^{i-1}x_{J+2}^{r-i+1},x_{J+1}^{i-2}x_{J+2}^{r-i+2},\ldots,x_{J+1}x_{J+2}^{r-1},~x_{a}^{r-t} x_{a+1}^{t} \right.\\
&\left.:a \geq J+2;~ 0 \leq t \leq r-1\right)
\end{align*}
of $S_{J+1}$, which is clearly a homogeneous ideal. Therefore, $\frac{S_{J+1}}{P_{r,i,J}}$ is also a graded algebra. For each $j\geq 0$, we have
\begin{align*}
\dim_{\mathbb{F}}\left(\frac{S_{J+1}}{P_{r,i,J}}\right)_j=\dim_{\mathbb{F}}\left(\frac{(S_{J+1})_j}{(P_{r,i,J})_j}\right)\leq\dim_{\mathbb{F}}((S_{J+1})_j) \leq \dim_{\mathbb{F}}((S)_j)= p(j)<\infty.
\end{align*}
Therefore, the Hilbert-Poincar\'e series of $\frac{S_{J+1}}{P_{r,i,J}}$ is well defined and given by
\begin{align*}
\mathrm{HP}_{\frac{S_{J+1}}{P_{r,i,J}}}(q)=\sum_{j\geq0}\dim_{\mathbb{F}}\left(\frac{S_{J+1}}{P_{r,i,J}}\right)_j q^j.
\end{align*}
We now relate the Hilbert-Poincar\'e series $\mathrm{HP}_{\frac{S_{J+1}}{P_{r,i,J}}}(q)$ to the partition function $B_{r,i,J}(n)$. We know that $P_{r,i,J}$ is generated by $x_{J+1}^{i}$, $x_{J+1}^{i-1}x_{J+2}^{r-i+1}, \ldots, x_{J+1}x_{J+2}^{r-1}$ and the monomials of the form  $x_{a}^{r-t} x_{a+1}^{t}$, such that $a \geq J+2$ and $0 \leq t \leq r-1$. Note that $\left(\frac{S_{J+1}}{P_{r,i,J}}\right)_j$ is generated by monomials, say $x_{l_1}x_{l_2} \cdots x_{l_m} \in S_{J+1}/P_{r,i,J}$, of weight $\sum_{p=1}^{m}l_p=j$. We associate a unique partition $(l_1,l_2,\ldots,l_m)$ of $j$ to this monomial which is counted by $B_{r,i,J}(j)$. Therefore, $\dim_{\mathbb{F}}\left(\frac{S_{J+1}}{P_{r,i,J}}\right)_j=B_{r,i,J}(j)$, and
\begin{align}\label{eqj1}
\mathrm{HP}_{\frac{S_{J+1}}{P_{r,i,J}}}(q)=\sum_{j\geq0}B_{r,i,J}(j)q^j=\mathcal{B}_{r,i,J}(q).
\end{align}
\par Let $a,~t$, and $\ell $ be integers such that $1 \leq \ell \leq r$. We define the following two ideals of $S_k$ for $k\geq J+1$:
\begin{equation*}
    P_{k}:=\left(x_{a}^{r-t} x_{a+1}^{t}:a \geq k,~0 \leq t \leq r-1\right)
\end{equation*}
and
\begin{equation*}
P_{k}^{\ell}:=
	\left(x_{k}^{\ell},~ x_{k}^{\ell-1}x_{k+1}^{r-\ell+1},~x_{k}^{\ell-2}x_{k+1}^{r-\ell+2},\ldots,x_{k}x_{k+1}^{r-1},~ P_{k+1} \right).
\end{equation*}
We note that $P_{r,i,J}=P_{J+1}^{i}$.
\par We denote the Hilbert-Poincar\'e series $\mathrm{HP}_{\frac{S_{k}}{P_{k}}}(q)$ by $\mathrm{HP}^{k}$ and the  Hilbert-Poincar\'e series $\mathrm{HP}_{\frac{S_{k}}{P_{k}^{\ell}}}(q)$ by $\mathrm{HP}_{\ell}^{k}$. Also, we use $\mathrm{HP}\left(\frac{A}{I}\right)$ in place of $\mathrm{HP}_{\frac{A}{I}}(q)$. With these notations, we note the following:
\begin{enumerate} 
\item[(N1)] $\mathrm{HP}_1^{k}= \mathrm{HP}^{k+1}$.
\item[(N2)] $\mathrm{HP}_r^{k}=\mathrm{HP}^{k}$.
\item[(N3)] $\mathrm{HP}\left(\frac{S_{J+1}}{P_{r,i,J}}\right)=\mathrm{HP}\left(\frac{S_{J+1}}{P_{J+1}^{i}}\right)=\mathrm{HP}_{i}^{J+1}$.
\end{enumerate}
Combining (N3) with \eqref{eqj1}, we obtain
\begin{align}\label{eqj2}
\mathrm{HP}_{i}^{J+1}=\mathcal{B}_{r,i,J}(q).
\end{align}
\par We now recall a result of Afsharijoo which gives a recursion formula for $\mathrm{HP}_{\ell}^{k}$ for all integers $k\geq J+1$.
\begin{lemma}[\cite{Afsharijoo_2021}]\label{Lemma2.1}
Let $J$ be a nonnegative integer. Let $k,~r$, and $\ell$ be positive integers with $r \geq 2$ and $1\leq \ell \leq r$. Then for any $k \geq J+1$, we have 
\begin{align}\label{n1}
\mathrm{HP}_\ell ^{k}=\sum_{j=1}^{\ell} q^{k(j-1)} \mathrm{HP}_{r-j+1} ^{k+1}. 
\end{align}
\end{lemma}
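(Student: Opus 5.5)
The recursion \eqref{n1} should follow from repeated use of the standard short exact sequence for colon ideals. For a homogeneous ideal $I$ of $S_k$ and a monomial $m$ of weight $w$, multiplication by $m$ gives the exact sequence
\begin{align*}
0\to \frac{S_k}{(I:m)}(-w)\xrightarrow{\cdot m}\frac{S_k}{I}\to\frac{S_k}{(I,m)}\to 0,
\end{align*}
and therefore
\begin{align*}
\mathrm{HP}\left(\frac{S_k}{I}\right)=\mathrm{HP}\left(\frac{S_k}{(I,m)}\right)+q^{w}\,\mathrm{HP}\left(\frac{S_k}{(I:m)}\right).
\end{align*}
I will apply this with $m=x_k$ (weight $k$) and argue by induction on $\ell$. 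The base case $\ell=1$ is (N1): $P_k^1=(x_k,P_{k+1})$, so $\mathrm{HP}_1^k=\mathrm{HP}^{k+1}=\mathrm{HP}_r^{k+1}$ by (N2). This is exactly the $j=1$ term of \eqref{n1}.

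For $\ell\ge 2$, take $I=P_k^{\ell}$ and $m=x_k$. First, $(P_k^\ell,x_k)=(x_k,P_{k+1})$, since every generator $x_k^{\ell-s}x_{k+1}^{r-\ell+s}$ with $\ell-s\geq1$ is divisible by $x_k$. Hence the first term is $\mathrm{HP}^{k+1}=\mathrm{HP}^{k+1}_r$. Second, I must compute the colon ideal $(P_k^\ell:x_k)$. For monomial ideals the colon is generated by the generators divided by $\gcd(\text{gen},x_k)$. This gives $x_k^{\ell-1}$, then $x_k^{\ell-1-s}x_{k+1}^{r-\ell+s}$ for $s=1,\dots,\ell-1$ (the last being $x_{k+1}^{r-1}$), together with $P_{k+1}$. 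The generators $x_{k+1}^{r-1}$ and $P_{k+1}$ together give
\begin{align*}
\left(x_{k+1}^{r-1},\ x_{k+1}^{r-1-t}x_{k+2}^{t}\ (0\le t\le r-2)\ \text{from } P_{k+1},\ P_{k+2}\right)=P_{k+1}^{r-1}\quad\text{(as an ideal of } S_{k+1}).
\end{align*}
The point is that $x_{k+1}^{r-1}$ absorbs the generator $x_{k+1}^r$, and the remaining pure $(k+1,k+2)$ generators $x_{k+1}^{r-t}x_{k+2}^t$ with $t\ge1$ are exactly the generators $x_{k+1}^{(r-1)-s}x_{k+2}^{r-(r-1)+s}$ of $P^{r-1}_{k+1}$.

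So $(P_k^\ell:x_k)=\left(x_k^{\ell-1},\,x_k^{\ell-1-s}x_{k+1}^{r-\ell+s}\ (1\le s\le \ell-2),\,P^{r-1}_{k+1}\right)$. I then expect to show that in $S_k$ modulo $P_{k+1}^{r-1}$ this is the structure of $P_k^{\ell-1}$, except that the tail $P_{k+1}$ has been replaced by $P_{k+1}^{r-1}$. To handle this, I will strengthen the induction to a two-parameter family: ideals $Q_k^{\ell,\ell'}=\left(x_k^{\ell},\ x_k^{\ell-s}x_{k+1}^{r-\ell+s}\ (1\le s\le\ell-1),\ P_{k+1}^{\ell'}\right)$, where $P_{k+1}^{r}=P_{k+1}$. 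The claim to prove is
\begin{align*}
\mathrm{HP}(S_k/Q_k^{\ell,\ell'})=\sum_{j=1}^{\ell}q^{k(j-1)}\,\mathrm{HP}^{k+1}_{\ell'-j+1}
\end{align*}
for $\ell\le\ell'$. Dividing by $x_k$ lowers both $\ell$ and $\ell'$ by one, and the same colon and sum computations apply. Specializing to $\ell'=r$ gives \eqref{n1}, because the term $q^{k(j-1)}\mathrm{HP}^{k+1}_{r-j+1}$ comes out after $j-1$ divisions by $x_k$.

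The main obstacle is the colon computation: checking that the mixed generators $x_k^{\ell-s}x_{k+1}^{r-\ell+s}$, after division, are compatible with the lowered tail index. Concretely, the exponent of $x_{k+1}$ in $x_k^{\ell-1-s}x_{k+1}^{r-\ell+s}$ should match $(r-1)-(\ell-1)+s$, and since $r-\ell+s=(r-1)-(\ell-1)+s$ it does. I must also verify that no redundant generator spoils the equality of ideals. Since the base case $\ell=1$ uses $(x_k,P_{k+1}^{\ell'})$, whose Hilbert series is $\mathrm{HP}^{k+1}_{\ell'}$, the induction closes.
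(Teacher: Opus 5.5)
The paper does not prove this lemma; it quotes it from Afsharijoo, so there is no internal proof to compare with. Your overall route is sound: the colon exact sequence applied to $x_k$, with the base case from (N1) and (N2). Your first-level computations are also correct, namely $(P_k^\ell,x_k)=(x_k,P_{k+1})$ and
\[
(P_k^\ell:x_k)=\bigl(x_k^{\ell-1},\ x_k^{\ell-1-s}x_{k+1}^{r-\ell+s}\ (1\le s\le \ell-2),\ P^{r-1}_{k+1}\bigr).
\]
In the display preceding this, the generators of $P_{k+1}$ in $x_{k+1},x_{k+2}$ should read $x_{k+1}^{r-t}x_{k+2}^{t}$, $0\le t\le r-1$, not $x_{k+1}^{r-1-t}x_{k+2}^{t}$; your sentence after the display has it right.

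\textbf{The gap: the two-parameter family as you defined it.} The mixed generators of your $Q_k^{\ell,\ell'}$ are $x_k^{\ell-s}x_{k+1}^{r-\ell+s}$, independent of $\ell'$. With that definition the identity you set out to prove is false once $\ell'<r$. Take $r=3$ and $\ell=\ell'=2$. Since $x_{k+1}^2\in P^2_{k+1}$, your $Q_k^{2,2}=(x_k^2,\,x_kx_{k+1}^2,\,P^2_{k+1})$ equals $(x_k^2,P^2_{k+1})$, so its series is $(1+q^k)\mathrm{HP}^{k+1}_2$. Your claim instead predicts $\mathrm{HP}^{k+1}_2+q^k\mathrm{HP}^{k+1}_1$. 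The coefficients of $q^{2k+1}$ are $2$ (standard monomials $x_{2k+1}$ and $x_kx_{k+1}$) versus $1$ (only $x_{2k+1}$).

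The same defect breaks the induction itself, in two ways:
\begin{itemize}
\item For $\ell\ge 3$, the ideal $(P_k^\ell:x_k)$ displayed above is not your $Q_k^{\ell-1,r-1}$, whose mixed exponents would be $r-\ell+1+s$.
\item For $\ell'\le r-1$, the quotient generator $x_{k+1}^{r-1}$ already lies in $P^{\ell'}_{k+1}$, so colon by $x_k$ does not lower $\ell'$.
\end{itemize}
So ``dividing by $x_k$ lowers both $\ell$ and $\ell'$ by one'' is false for the ideals you wrote down.

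\textbf{The fix.} Your final check $r-\ell+s=(r-1)-(\ell-1)+s$ implicitly assumes the right definition: the mixed generators must be $x_k^{\ell-s}x_{k+1}^{\ell'-\ell+s}$ for $1\le s\le \ell-1$, so that $Q_k^{\ell,r}=P_k^\ell$. With this definition:
\begin{itemize}
\item $(Q_k^{\ell,\ell'},x_k)=(x_k,P^{\ell'}_{k+1})$, which contributes $\mathrm{HP}^{k+1}_{\ell'}$.
\item $(Q_k^{\ell,\ell'}:x_k)=Q_k^{\ell-1,\ell'-1}$. This needs $(x_{k+1}^{\ell'-1},P^{\ell'}_{k+1})=P^{\ell'-1}_{k+1}$ for every $2\le\ell'\le r$, not only $\ell'=r$, which is all you checked. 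It holds: $x_{k+1}^{\ell'-1}$ absorbs $x_{k+1}^{\ell'}$ and $x_{k+1}^{\ell'-1}x_{k+2}^{r-\ell'+1}$, and the remaining $x_{k+1}^{\ell'-s}x_{k+2}^{r-\ell'+s}$, $2\le s\le \ell'-1$, are exactly the mixed generators of $P^{\ell'-1}_{k+1}$.
\end{itemize}
With these two facts your induction on $\ell$ (keeping $\ell\le\ell'\le r$) closes.

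\textbf{A shorter route.} You can avoid the auxiliary family altogether. By the monomial colon rule, $(P_k^\ell:x_k^{j})+(x_k)=(x_k,P^{r-j}_{k+1})$ for $0\le j\le \ell-1$, and $(P_k^\ell:x_k^{\ell})=S_k$. Apply your exact sequence to the pair $(P_k^\ell:x_k^{j})$, $x_k$ for $j=0,\dots,\ell-1$, using $((P_k^\ell:x_k^j):x_k)=(P_k^\ell:x_k^{j+1})$. Telescoping then yields \eqref{n1} directly.
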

We rewrite \cite[Lemma 3.1]{Afsharijoo_2021} to obtain \eqref{n1}.
In the following lemma, we provide a recursion formula for $\mathrm{HP}_i ^{J+1}$.
\begin{lemma}\label{Lemma2.2}
Let $J$ be a nonnegative integer, and let $r$, $i$ be integers with $r \geq 2,~1 \leq i \leq r$. Then, for every integer $d \geq J+1$,  we have the following recursion formula:
\begin{align}\label{t1}
\mathrm{HP}_i ^{J+1}= \sum_{j=1}^{r} B_{i,j,(r-1)d+j}^{J} \mathrm{HP}_{r-j+1} ^{d+1}.
\end{align}
Here, the coefficients $B_{i,j,(r-1)d+j}^{J} \in \mathbb{F}[[q]]$ satisfy the following recursion formula for $1 \leq j \leq r$
$$B_{i,j,(r-1)(d+1)+j}^{J}= q^{(d+1)(j-1)} \sum_{m=1}^{r-j+1} B_{i,m,(r-1)d+m}^{J}$$
with the following initial conditions
$$B_{i,j,(r-1)(J+1)+j}^{J}=\begin{cases}
q^{(J+1)(j-1)} &\text{ if } 1 \leq j \leq i;  \\
0 &\text{ if } i+1 \leq j \leq r.
\end{cases}$$
\end{lemma}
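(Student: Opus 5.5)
We argue by induction on $d \geq J+1$, with Lemma~\ref{Lemma2.1} as the only input. The coefficients $B^{J}_{i,j,(r-1)d+j}$ are defined by the stated recursion and initial conditions. So it suffices to show two things: the expansion \eqref{t1} holds for $d=J+1$ with the initial coefficients, and if it holds for $d$ then it holds for $d+1$ with the recursively defined coefficients.

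\emph{Base case $d=J+1$.} Apply Lemma~\ref{Lemma2.1} with $k=J+1$ and $\ell=i$:
\begin{align*}
\mathrm{HP}_i^{J+1}=\sum_{j=1}^{i} q^{(J+1)(j-1)}\,\mathrm{HP}_{r-j+1}^{J+2}.
\end{align*}
Extend the sum to $j=1,\ldots,r$ by giving the terms with $i+1\le j\le r$ the coefficient $0$. The resulting coefficients are exactly $B^{J}_{i,j,(r-1)(J+1)+j}$ as prescribed by the initial conditions, so \eqref{t1} holds for $d=J+1$.

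\emph{Inductive step.} Assume \eqref{t1} holds for some $d\ge J+1$. Since $d+1\ge J+1$, Lemma~\ref{Lemma2.1} applies with $k=d+1$ and $\ell=r-m+1$ for each $1\le m\le r$:
\begin{align*}
\mathrm{HP}_{r-m+1}^{d+1}=\sum_{j=1}^{r-m+1} q^{(d+1)(j-1)}\,\mathrm{HP}_{r-j+1}^{d+2}.
\end{align*}
Substitute this into \eqref{t1}. This gives a double sum over pairs $(m,j)$ with $1\le m\le r$ and $1\le j\le r-m+1$. That index set is the same as $1\le j\le r$, $1\le m\le r-j+1$, so we can interchange the order of summation:
\begin{align*}
\mathrm{HP}_i^{J+1}=\sum_{j=1}^{r}\Big(q^{(d+1)(j-1)}\sum_{m=1}^{r-j+1}B^{J}_{i,m,(r-1)d+m}\Big)\mathrm{HP}_{r-j+1}^{d+2}.
\end{align*}
The bracketed coefficient is by definition $B^{J}_{i,j,(r-1)(d+1)+j}$. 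This is \eqref{t1} with $d$ replaced by $d+1$, which completes the induction.

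The only delicate point is the index swap, which needs the triangular range $j+m\le r+1$ to be described correctly. We also need each application of Lemma~\ref{Lemma2.1} to be legitimate: this requires $k\ge J+1$ and $1\le r-m+1\le r$, and both hold here. Every step is a finite manipulation of formal power series, so no convergence issues arise at this stage. The $q$-adic topology only becomes relevant later, when one lets $d\to\infty$.
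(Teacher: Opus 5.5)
Your proof is correct and follows essentially the same argument as the paper. It uses induction on $d$: the base case comes from Lemma~\ref{Lemma2.1} at $k=J+1$, $\ell=i$, padded with zero coefficients, and the inductive step substitutes Lemma~\ref{Lemma2.1} at $k=d+1$ and swaps the order of summation over the triangle $m+j\le r+1$. The only difference is cosmetic: your choice of index names makes the swap land directly on $B^{J}_{i,j,(r-1)(d+1)+j}$, whereas the paper first passes through the intermediate index $\ell=r-m+1$ and then relabels.
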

\begin{proof}
To prove the required recursion formula \eqref{t1}, we use induction on $d$. First, we prove the formula for $d=J+1$. By using Lemma \ref{Lemma2.1} for $k=J+1$ and $\ell=i$, we have
\begin{align*}
\mathrm{HP}_{i}^{J+1}&= \sum_{j=1}^{i}q^{(J+1)(j-1)}\mathrm{HP}_{r-j+1}^{J+2}\\
&=\sum_{j=1}^{r} B_{i,j,(r-1)(J+1)+j}^{J}\mathrm{HP}_{r-j+1} ^{J+2}.
\end{align*}
Hence, \eqref{t1} is true for $d=J+1$. Next, we assume that \eqref{t1} is true for all $J+1 \leq d \leq s$. Then, by the induction hypothesis for $d=s$ we have
$$\mathrm{HP}_{i}^{J+1}=\sum_{j=1}^{r} B_{i,j,(r-1)s+j}^{J}\mathrm{HP}_{r-j+1} ^{s+1}.$$
Now, we prove \eqref{t1} for $d=s+1$. Replacing $\mathrm{HP}_{r-j+1} ^{s+1}$ in the above equation with the aid of Lemma \ref{Lemma2.1}, we obtain
\begin{align*}
\mathrm{HP}_{i}^{J+1}= \sum_{j=1}^{r} B_{i,j,(r-1)s+j}^{J} \left( \sum_{m=1}^{r-j+1}q^{(s+1)(m-1)}\mathrm{HP}_{r-m+1}^{s+2} \right). 
\end{align*}
Rewriting the above equation yields
\begin{align*}
\mathrm{HP}_{i}^{J+1}=&\sum_{\ell=1}^{r} \left( q^{(s+1)(r-\ell)} \sum_{j=1}^{\ell} B_{i,j,(r-1)s+j}^{J}  \right)\mathrm{HP}_{\ell} ^{s+2} \\
=&\sum_{\ell=1}^{r} B_{i,r-\ell+1,(r-1)(s+1)+(r-\ell+1)}^{J} \mathrm{HP}_{\ell} ^{s+2} \\
=&\sum_{j=1}^{r} B_{i,j,(r-1)(s+1)+j}^{J}\mathrm{HP}_{r-j+1} ^{s+2}.
\end{align*}
Hence, \eqref{t1} is true for $d=s+1$. This completes the proof. 
\end{proof}
Next, we state a recursion formula for $\mathcal{A}_{(r-1)J+\ell}$, which is given by Coulson \textit{et al.} \cite{Coulson_2017}.
\begin{lemma}[\cite{Coulson_2017}]\label{Lemma2.3}
Let $J$ be a nonnegative integer, and let $r, \ell$ be integers with $r \geq 2,~1 \leq \ell \leq r$. Then, for every integer $d \geq J+1$,  we have the following recursion formula:
$$\mathcal{A}_{(r-1)J+\ell}=\sum_{j=1}^{r} A_{\ell,j,(r-1)d+j}^J \mathcal{A}_{(r-1)d+j}.$$
Here, the coefficients $A_{\ell,j,(r-1)d+j}^J \in \mathbb{F}[[q]]$ satisfy the following recursion formula for $1 \leq j \leq r$  
$$A_{\ell,j,(r-1)(d+1)+j}^J= q^{(d+1)(j-1)} \sum_{m=1}^{r-j+1} A_{\ell,m,(r-1)d+m}^J$$
with the following initial conditions
$$A_{\ell,j,(r-1)(J+1)+j}^J=\begin{cases}
q^{(J+1)(j-1)} &\text{ if } 1\leq j \leq r-\ell+1;  \\
0 &\text{ if } r-\ell+2 \leq j \leq r.
\end{cases}$$
\end{lemma}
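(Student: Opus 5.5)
The plan is to mirror the proof of Lemma~\ref{Lemma2.2}. The role of Lemma~\ref{Lemma2.1} there will be played by a one-step identity for the series $\mathcal{A}$, which I derive directly from the recursive definition. The whole proof then reduces to this one-step identity plus an induction on $d$.

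\emph{Step 1 (one-step identity).} I claim that for every $g\geq 1$ and $1\leq \ell\leq r$,
\begin{align*}
\mathcal{A}_{(r-1)(g-1)+\ell}=\sum_{j=1}^{r-\ell+1} q^{g(j-1)}\mathcal{A}_{(r-1)g+j}.
\end{align*}
To see this, rewrite the defining recursion for $2\leq j\leq r$ as
\begin{align*}
q^{g(j-1)}\mathcal{A}_{(r-1)g+j}=\mathcal{A}_{(r-1)(g-1)+r-j+1}-\mathcal{A}_{(r-1)(g-1)+r-j+2}.
\end{align*}
Summing over $2\leq j\leq r-\ell+1$ telescopes to $\mathcal{A}_{(r-1)(g-1)+\ell}-\mathcal{A}_{(r-1)(g-1)+r}$. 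The remaining $j=1$ term is $\mathcal{A}_{(r-1)g+1}=\mathcal{A}_{(r-1)(g-1)+r}$, which cancels the last summand. For $\ell=r$ only the $j=1$ term appears, and the identity is just the definition. This is the exact analogue of Lemma~\ref{Lemma2.1}, with the index $\ell$ replaced by $r-\ell+1$.

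\emph{Step 2 (base case $d=J+1$).} Apply Step 1 with $g=J+1$. This gives
\begin{align*}
\mathcal{A}_{(r-1)J+\ell}=\sum_{j=1}^{r-\ell+1} q^{(J+1)(j-1)}\mathcal{A}_{(r-1)(J+1)+j}.
\end{align*}
This is precisely the claimed formula with the stated initial values $A^J_{\ell,j,(r-1)(J+1)+j}$: the coefficient is $q^{(J+1)(j-1)}$ for $j\leq r-\ell+1$ and $0$ otherwise.

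\emph{Step 3 (induction on $d$).} Assume the formula holds for $d=s$. Substitute Step 1 with $g=s+1$ and $\ell=m$ for each $\mathcal{A}_{(r-1)s+m}$. This gives
\begin{align*}
\mathcal{A}_{(r-1)J+\ell}=\sum_{m=1}^{r}A^J_{\ell,m,(r-1)s+m}\sum_{j=1}^{r-m+1}q^{(s+1)(j-1)}\mathcal{A}_{(r-1)(s+1)+j}.
\end{align*}
Interchange the finite sums, using that $j\leq r-m+1$ is equivalent to $m\leq r-j+1$. The coefficient of $\mathcal{A}_{(r-1)(s+1)+j}$ becomes $q^{(s+1)(j-1)}\sum_{m=1}^{r-j+1}A^J_{\ell,m,(r-1)s+m}$. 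This is exactly the defining recursion for $A^J_{\ell,j,(r-1)(s+1)+j}$, which completes the induction.

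All sums involved are finite and all objects lie in $\mathbb{F}[[q]]$. In particular, Step 1 shows that each $\mathcal{A}_{(r-1)g+j}$ is a genuine power series, since the divisions by powers of $q$ in the definition are exact; so no convergence issue arises. The only delicate point is the index bookkeeping in the telescoping of Step 1, namely matching $r-j+1$ with $\ell$ and treating the $j=1$ term separately through $\mathcal{A}_{(r-1)g+1}=\mathcal{A}_{(r-1)(g-1)+r}$. I expect that to be the main obstacle, modest as it is.
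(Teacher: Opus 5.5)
Your argument is correct. The paper gives no proof of this lemma of its own: it imports the identity from \cite[p.~122]{Coulson_2017} and the coefficient recursion from \cite[Proposition~8.4]{Coulson_2017}, after a change of notation. Your route is self-contained. Telescoping the defining recursion gives the one-step identity
\[
\mathcal{A}_{(r-1)(g-1)+\ell}=\sum_{j=1}^{r-\ell+1}q^{g(j-1)}\mathcal{A}_{(r-1)g+j},
\]
and the induction from the proof of Lemma~\ref{Lemma2.2} then goes through verbatim. What this buys is transparency. Under the dictionary $\mathrm{HP}^{k}_{r-\ell+1}\leftrightarrow\mathcal{A}_{(r-1)(k-1)+\ell}$, your Step~1 is literally Lemma~\ref{Lemma2.1}. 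The dictionary is well defined on the overlapping index $\mathcal{A}_{(r-1)(k-1)+r}=\mathcal{A}_{(r-1)k+1}$ precisely because of (N1) and (N2). So the coefficient equality in Lemma~\ref{Lemma2.4} holds for a structural reason: both families obey the same one-step recursion with matching initial data. It is not a coincidence between two independently sourced formulas. Your route also avoids the notational translation ${}_i^{J}h_l^{(j)}\to A^J_{\ell,j,(r-1)d+j}$, which is where the citation approach is most fragile.

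One correction. Your closing claim, that Step~1 shows each $\mathcal{A}_{(r-1)g+j}$ is a genuine power series, is not valid. Step~1 is only the definition rearranged. It expresses level-$(g-1)$ series in terms of level-$g$ series, so it says nothing about whether $\mathcal{A}_{(r-1)(g-1)+r-j+1}-\mathcal{A}_{(r-1)(g-1)+r-j+2}$ is divisible by $q^{g(j-1)}$. That divisibility is a genuine theorem in the motivated-proof framework. It is the same kind of fact that underlies the limit \eqref{g2}, which the paper imports from \cite[Remark~8.1]{Coulson_2017}.

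You do not need it here. Every step you perform is a finite manipulation valid in the field $\mathbb{F}((q))$, where the divisions are always defined. The coefficients $A^J_{\ell,j,(r-1)d+j}$ are visibly polynomials in $q$ by the recursion and initial conditions, and that is all the lemma asserts about membership in $\mathbb{F}[[q]]$. Replace the remark with ``work in $\mathbb{F}((q))$'' and the proof is complete.
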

We get the recursion formula for $\mathcal{A}_{(r-1)J+\ell}$ from \cite[p. 122]{Coulson_2017}, with $G$ replaced by $\mathcal{A}$.
The recursion formula for $A^J_{\ell,j,(r-1)d+j}$ is given in \cite[Proposition 8.4]{Coulson_2017}. Here, we represent $~ _i ^{J} h_{l}^{(j)}$ of \cite[p. 122]{Coulson_2017} by $A_{\ell,j,(r-1)d+j}^{J}$, where $j$ is replaced with $d$, $i$ is replaced with $\ell$, and $l$ is replaced with $j$.
\par To prove Theorem \ref{Thm2}, it is enough to prove that $\mathrm{HP}^{J+1}_{i}=\mathcal{A}_{(r-1)J+\ell}$ (see, \eqref{eqj2}), where $\ell=r-i+1$. In this regard, we first prove that the coefficients in the recursion formulae of $\mathrm{HP}^{J+1}_{i}$ (Lemma \ref{Lemma2.2}) and $\mathcal{A}_{(r-1)J+\ell}$ (Lemma \ref{Lemma2.3}) are equal.
\begin{lemma}\label{Lemma2.4}
For all $d \geq J+1$, $r \geq 2$, and $1 \leq v \leq r$, we have 
\begin{align}\label{neq1}
A_{\ell,v,(r-1)d+v}^J=B_{i,v,(r-1)d+v}^J,
\end{align}
where $\ell=r-i+1$.
\end{lemma}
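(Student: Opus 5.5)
Both families of coefficients, $A_{\ell,v,(r-1)d+v}^J$ from Lemma~\ref{Lemma2.3} and $B_{i,v,(r-1)d+v}^J$ from Lemma~\ref{Lemma2.2}, are uniquely determined by a first-order recursion in $d$ together with initial data at $d=J+1$. Moreover, the recursion is literally the same in both cases. For each $1\le v\le r$,
\[
X_{v}^{(d+1)}=q^{(d+1)(v-1)}\sum_{m=1}^{r-v+1}X_{m}^{(d)},
\]
where $X_v^{(d)}$ stands for either $A_{\ell,v,(r-1)d+v}^J$ or $B_{i,v,(r-1)d+v}^J$. The plan is therefore an induction on $d\ge J+1$, with the statement being the equality \eqref{neq1} simultaneously for all $v\in\{1,\dots,r\}$.

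\emph{Base case $d=J+1$.} We compare the initial conditions. For $B$, the value is $q^{(J+1)(v-1)}$ when $1\le v\le i$ and $0$ when $i+1\le v\le r$. For $A$, the value is $q^{(J+1)(v-1)}$ when $1\le v\le r-\ell+1$ and $0$ when $r-\ell+2\le v\le r$. Substituting $\ell=r-i+1$ gives $r-\ell+1=i$ and $r-\ell+2=i+1$, so the two case distinctions coincide exactly and \eqref{neq1} holds for $d=J+1$ and every $v$.

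\emph{Inductive step.} Assume $A_{\ell,m,(r-1)d+m}^J=B_{i,m,(r-1)d+m}^J$ for all $1\le m\le r$ at some $d\ge J+1$. Fix $v$ with $1\le v\le r$. The recursion for $A$ in Lemma~\ref{Lemma2.3} expresses $A_{\ell,v,(r-1)(d+1)+v}^J$ as $q^{(d+1)(v-1)}$ times $\sum_{m=1}^{r-v+1}A_{\ell,m,(r-1)d+m}^J$. Every index $m$ in this sum lies in $\{1,\dots,r\}$, so the induction hypothesis lets us replace each summand by $B_{i,m,(r-1)d+m}^J$. The resulting expression is exactly the right-hand side of the recursion in Lemma~\ref{Lemma2.2}, so it equals $B_{i,v,(r-1)(d+1)+v}^J$. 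This completes the induction.

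No step poses a real obstacle. The only point requiring care is the base case, where one must check that the cutoff indices match under $\ell=r-i+1$. One should also induct on the full vector of $r$ coefficients at once rather than on each $v$ separately, since the recursion for index $v$ at level $d+1$ involves the indices $m\le r-v+1$ at level $d$.
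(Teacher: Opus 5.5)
Your proof is correct and follows the paper's argument: induction on $d$, with the base case obtained by matching the initial conditions under $\ell=r-i+1$, and the inductive step obtained by pushing the equality through the identical recursion. Your explicit remark that the induction hypothesis must hold for all $m\in\{1,\dots,r\}$ simultaneously spells out a point the paper's proof uses but leaves implicit.
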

\begin{proof}
We prove \eqref{neq1} by induction on $d$. From Lemma \ref{Lemma2.3}, for $d=J+1$, we have
$$A_{\ell,v,(r-1)(J+1)+v}^{J}=\begin{cases}
	q^{(J+1)(v-1)} &\text{ if } 1\leq v \leq r-\ell+1;  \\
	0 &\text{ if } r-\ell+2 \leq v \leq r.
\end{cases}$$
We put $\ell=r-i+1$ in the above equation to obtain the following:
$$A_{r-i+1,v,(r-1)(J+1)+v}^{J}=\begin{cases}
	q^{(J+1)(v-1)} &\text{ if } 1\leq v \leq i;  \\
	0 &\text{ if } i+1 \leq v \leq r.
\end{cases}$$
Clearly, the expression above and Lemma \ref{Lemma2.2} yield
\begin{align*}
A_{\ell,v,(r-1)(J+1)+v}^{J}=B_{i,v,(r-1)(J+1)+v}^{J},
\end{align*}
for $\ell=r-i+1$. This proves \eqref{neq1} for $d=J+1$.
\par Now, we assume that \eqref{neq1} is true for all $J+1 \leq d\leq s$. By the induction hypothesis for $d=s$, and $\ell=r-i+1$, we have
\begin{align}\label{neq2}
A_{\ell,v,(r-1)s+v}^{J}=B_{i,v,(r-1)s+v}^{J}. 
\end{align}
By Lemma \ref{Lemma2.3}, we have 
$$A_{\ell,v,(r-1)(s+1)+v}^{J}= q^{(s+1)(v-1)} \sum_{m=1}^{r-v+1} A_{\ell,m,(r-1)s+m}^{J}.$$
Using \eqref{neq2} in the above equation, we obtain
\begin{align*}
A_{\ell,v,(r-1)(s+1)+v}^{J}&= q^{(s+1)(v-1)} \sum_{m=1}^{r-v+1} B_{i,m,(r-1)s+m}^{J}\\
&=B_{i,v,(r-1)(s+1)+v}^{J}\hspace{1cm} (\text{due to Lemma \ref{Lemma2.2}}).
\end{align*}
Thus, \eqref{neq1} is true for $d=s+1$. This proves \eqref{neq1} for all $d\geq J+1$.	
\end{proof}
Now, we are ready to prove Theorem \ref{Thm2}. In the following theorem, we prove that $\mathcal{A}_{(r-1)J+\ell}=\mathrm{HP}^{J+1}_{i}$, which proves Theorem \ref{Thm2}.
\begin{theorem}\label{Theorem2.5}
For $r \geq 2$, $1 \leq i \leq r$, and $J\geq0$, we have 
$$\mathcal{A}_{(r-1)J+\ell}=\mathrm{HP}_i^{J+1},$$ 
where $\ell=r-i+1$.
\end{theorem}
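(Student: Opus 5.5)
The plan is to subtract the two recursions and let $d\to\infty$ in the $q$-adic topology. Fix $r\geq 2$, $1\leq i\leq r$, $J\geq 0$ and put $\ell=r-i+1$. For every $d\geq J+1$, Lemma \ref{Lemma2.2} and Lemma \ref{Lemma2.3} give
\begin{align*}
\mathrm{HP}_i^{J+1}=\sum_{j=1}^{r} B^J_{i,j,(r-1)d+j}\,\mathrm{HP}^{d+1}_{r-j+1},\qquad
\mathcal{A}_{(r-1)J+\ell}=\sum_{j=1}^{r} A^J_{\ell,j,(r-1)d+j}\,\mathcal{A}_{(r-1)d+j}.
\end{align*}
By Lemma \ref{Lemma2.4} the coefficients agree; write $C_{j,d}$ for their common value. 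Subtracting yields, for every $d\geq J+1$,
\begin{align*}
\mathcal{A}_{(r-1)J+\ell}-\mathrm{HP}_i^{J+1}=\sum_{j=1}^{r} C_{j,d}\left(\mathcal{A}_{(r-1)d+j}-\mathrm{HP}^{d+1}_{r-j+1}\right).
\end{align*}
The left side does not depend on $d$. So it suffices to show that the right side tends to $0$ $q$-adically as $d\to\infty$, that is, that it lies in $(q^{N})$ for any prescribed $N$ once $d$ is large.

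The key observation is that both series inside the parentheses have constant term $1$ and no other terms of low degree. For $\mathrm{HP}^{d+1}_{r-j+1}$, the algebra $S_{d+1}/P^{r-j+1}_{d+1}$ is generated by variables $x_m$ with $m\geq d+1$. Hence its only monomial of weight less than $d+1$ is $1$, and $\mathrm{HP}^{d+1}_{r-j+1}\equiv 1 \pmod{q^{d+1}}$. For $\mathcal{A}_{(r-1)d+j}$, I would use Theorem \ref{Thm2}'s combinatorial meaning only as a heuristic, since it is what we are proving. Instead I would argue directly from the recursive definition, or cite Coulson \emph{et al.}, that $\mathcal{A}_{(r-1)d+j}\equiv 1 \pmod{q^{d+1}}$ (or at least modulo $q^{c d}$ for some $c>0$).

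To establish this congruence, I would induct on $g$ for $\mathcal{A}_{(r-1)g+i'}$. The base case $g=0$ is the product $\mathcal{A}_{i'}$, which is $\equiv 1$ modulo $q$. The case $i'=1$ is the identity $\mathcal{A}_{(r-1)g+1}=\mathcal{A}_{(r-1)(g-1)+r}$. For $i'\geq 2$, the quotient $(\mathcal{A}_{(r-1)(g-1)+r-i'+1}-\mathcal{A}_{(r-1)(g-1)+r-i'+2})/q^{g(i'-1)}$ must be shown to have constant term $1$ and vanishing low coefficients. This requires tracking the difference more carefully than a naive induction allows. \textbf{This step is the main obstacle.} The approach I would try first is to carry along the stronger statement that each $\mathcal{A}_{(r-1)g+i'}$ equals $1$ plus $q^{g+1}$ times a power series. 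I would check whether it survives the division by $q^{g(i'-1)}$ using the explicit product formulas in \cite{Coulson_2017} or \cite{Lepowsky and Zhu 2012}. Failing that, I would simply cite the fact from \cite{Coulson_2017}, where it is used for the same convergence argument.

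Granting the congruence, each difference $\mathcal{A}_{(r-1)d+j}-\mathrm{HP}^{d+1}_{r-j+1}$ lies in $(q^{d+1})$. The coefficients $C_{j,d}$ are genuine power series, as the recursion shows: they are sums of products of monomials in $q$ with nonnegative exponents. Hence the right-hand side lies in $(q^{d+1})$ for every $d\geq J+1$. Letting $d\to\infty$, the fixed series $\mathcal{A}_{(r-1)J+\ell}-\mathrm{HP}_i^{J+1}$ lies in $\bigcap_d (q^{d+1})=0$, so the two series are equal. Combined with \eqref{eqj2}, this also yields Theorem \ref{Thm2}.
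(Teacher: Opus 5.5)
Your argument is correct, granting the congruence you flag, but it is organized differently from the paper's. The paper passes to the limit coefficient by coefficient. Since the common coefficients with $j\ge 2$ carry the factor $q^{d(j-1)}$, those terms vanish as $d\to\infty$, and the $j=1$ coefficients converge to a common limit. Both $\mathcal{A}_{(r-1)J+\ell}$ and $\mathrm{HP}_i^{J+1}$ are then identified with that limit, using only $\mathcal{A}_{(r-1)(d+1)+1}\to 1$ (cited from \cite{Coulson_2017}) and $\mathrm{HP}^{d+2}\to 1$.

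You instead subtract at a fixed level $d$ and put all the smallness into the brackets $\mathcal{A}_{(r-1)d+j}-\mathrm{HP}^{d+1}_{r-j+1}$. Each bracket compares the two sides of the statement itself at level $J=d$, $i=r-j+1$. This spares you from proving that the coefficient sequences converge and from any continuity argument for limits of products. The cost is a stronger input: you need $\mathcal{A}_{(r-1)d+j}\in 1+q^{d+1}\mathbb{F}[[q]]$ for every $1\le j\le r$, not just for $j=1$. As you suspected, a naive induction on $g$ does not give this; already $\mathcal{A}_{(r-1)g+1}=\mathcal{A}_{(r-1)(g-1)+r}$ only inherits the congruence modulo $q^{g}$. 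However, it is exactly the Empirical Hypothesis established in \cite{Lepowsky and Zhu 2012}, so citing it puts you on the same footing as the paper's citation.

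You can also shrink the input to what the paper uses. Since $C_{j,d}\in q^{d(j-1)}\mathbb{F}[[q]]$ for $j\ge 2$, those terms lie in $(q^{d})$ as soon as the $\mathcal{A}_{(r-1)d+j}$ are power series, and only the $j=1$ bracket needs $\mathcal{A}_{(r-1)d+1}\to 1$. The paper, in turn, tacitly needs the $\mathcal{A}_{(r-1)(d+1)+j}$ with $j\ge 2$ to be power series when it drops those terms in the limit.
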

\begin{proof}
From Lemma \ref{Lemma2.3}, we have
$$A_{\ell,m,(r-1)(d+1)+m}^J= q^{(d+1)(m-1)} \sum_{t=1}^{r-m+1} A_{\ell,t,(r-1)d+t}^J.$$
Clearly, the exponent of $q$ in $A_{\ell,m,(r-1)(d+1)+m}^{J}$ is at least $(d+1)(m-1)$. Therefore, for all $1\leq m \leq r$, $  \lim_{d \to +\infty}  A_{\ell,m,(r-1)(d+1)+m}^{J} $ exists. Here, the limits are taken in $q$-adic topology, as discussed in Section \ref{Section2}. In particular, for $2\leq m \leq r$, we have
\begin{align*}
\lim_{d \to +\infty}A_{\ell,m,(r-1)(d+1)+m}^{J}= \lim_{d \to +\infty} \left(  q^{(d+1)(m-1)} \left(  \sum_{t=1}^{r-m+1} A_{\ell,t,(r-1)d+t}^J    \right)  \right)=0.
\end{align*}
By Lemma \ref{Lemma2.3}, we have 
$$\mathcal{A}_{(r-1)J+\ell}=\sum_{j=1}^{r} A_{\ell,j,(r-1)(d+1)+j}^{J} \mathcal{A}_{(r-1)(d+1)+j}.$$
By taking the limit $d \to +\infty$ on both sides of the above equation, we get
\begin{align}\label{g1}
\mathcal{A}_{(r-1)J+\ell}= \lim_{d \to +\infty} A_{\ell,1,(r-1)(d+1)+1}^{J} \mathcal{A}_{(r-1)(d+1)+1}.
\end{align}
 From \cite[Remark 8.1]{Coulson_2017}, we find that
\begin{align}\label{g2}
 \lim_{d \to +\infty} \mathcal{A}_{(r-1)(d+1)+1}=1.  
\end{align}
Hence, \eqref{g1} and \eqref{g2} yield 
$$\mathcal{A}_{(r-1)J+\ell}=\lim_{d \to +\infty} A_{\ell,1,(r-1)(d+1)+1}^{J}. $$
As mentioned earlier that $\lim_{d \to +\infty} A_{\ell,1,(r-1)(d+1)+1}^{J}$ exists, let us denote it by $A^J_{\ell,1,\infty}$. Hence,
\begin{align}\label{g3}
\mathcal{A}_{(r-1)J+\ell}=A_{\ell,1,\infty}^{J}.
\end{align}
\par Next, we consider the coefficients in the recursion formula of $\mathrm{HP}^{J+1}_{i}$ under a similar analysis. By Lemma \ref{Lemma2.2}, we have
$$B_{i,m,(r-1)(d+1)+m}^{J}= q^{(d+1)(m-1)} \sum_{t=1}^{r-m+1} B_{i,t,(r-1)d+t}^{J}.$$
Similarly, for all $1\leq m \leq r$, $  \lim_{d \to +\infty}  B_{i,m,(r-1)(d+1)+m}^{J} $ exists, and for $2\leq m \leq r$, $\lim_{d \to +\infty}  B_{i,m,(r-1)(d+1)+m}^{J}=0$. From Lemma \ref{Lemma2.2}, for $d \geq J+1$, we have
$$\mathrm{HP}_i ^{J+1}= \sum_{j=1}^{r} B_{i,j,(r-1)(d+1)+j}^{J}\mathrm{HP}_{r-j+1} ^{d+2}.$$
Taking the limit $d \to +\infty$ on both sides of the above equation and using note (N2), we obtain
\begin{align}\label{g4}
\mathrm{HP}_i ^{J+1}&= \lim_{d \to +\infty} B_{i,1,(r-1)(d+1)+1}^{J}\mathrm{HP}_{r} ^{d+2}\nonumber\\
&= \lim_{d \to +\infty} B_{i,1,(r-1)(d+1)+1}^{J} \mathrm{HP}^{d+2}.
\end{align}
Now, we find the value of $ \lim_{d \to +\infty}\mathrm{HP}^{d+2}=1$. We note that $$\mathrm{HP} ^{d+2}=\mathrm{HP} \left( \frac{S_{d+2}}{P_{d+2}}\right).$$ 
In the graded algebra $\frac{S_{d+2}}{P_{d+2}}$, the zeroth homogeneous component $A_0$ is isomorphic to $\mathbb{F}$. Therefore, the zeroth component has dimension $1$. For $1 \leq u \leq d+1$, the homogeneous component $A_u$ is zero as there are no monomials of weight between $1$ and $d+1$. Therefore, there exists some $f(q) \in \mathbb{F}[[q]]$ such that $\mathrm{HP}^{d+2}=1+q^{d+2}f(q)$. This implies that
\begin{align}\label{g5}
\lim_{d \to +\infty}\mathrm{HP}^{d+2}=1.
\end{align}
From \eqref{g4} and \eqref{g5}, we obtain
$$\mathrm{HP}_i ^{J+1}=\lim_{d \to +\infty} B_{i,1,(r-1)(d+1)+1}^{J}.$$
As mentioned earlier, $\lim_{d \to +\infty} B_{i,1,(r-1)(d+1)+1}^{J}$ exists, let us denote it by $B_{i,1,\infty}^{J}$. Hence,
\begin{align}\label{g6}
\mathrm{HP}_i ^{J+1}=B_{i,1,\infty}^{J}.
\end{align}
By Lemma \ref{Lemma2.4}, for $1\leq m \leq r$, we have
\begin{align*}
A_{\ell,m,(r-1)(d+1)+m}^{J}=B_{i,m,(r-1)(d+1)+m}^{J},
\end{align*}
where $\ell=r-i+1$. Taking the limit $d \to +\infty$ on both sides of the above equation yields
\begin{align}\label{g7}
A_{\ell,1,\infty}^{J}=B_{i,1,\infty}^{J}.
\end{align}
From \eqref{g3}, \eqref{g6}, and \eqref{g7}, we have
$$\mathcal{A}_{(r-1)J+\ell}=A_{\ell,1,\infty}^{J}=B_{i,1,\infty}^{J}=\mathrm{HP}_i ^{J+1},$$
where $\ell=r-i+1$.	This completes the proof.
\end{proof}
\section{concluding remark}
In this article, we study $J$-generalization of the Rogers-Ramanujan-Gordon identities using methods from commutative algebra. There are numerous other significant generalizations of the Rogers-Ramanujan identities in the literature. It would be worthwhile to investigate whether the approach used here can be extended to other Rogers-Ramanujan-type identities.

\section{Data Availability Statements}
Data sharing not applicable to this article as no datasets were generated or analysed during the current study.
\section{Conflict of interest}
The authors assert that there are no conflicts of interest.


\begin{thebibliography}{999}
\bibitem{Afsharijoo_2021}
P. Afsharijoo, {\it Looking for a new version of Gordon's identities}, Ann. Comb. 25 (2021), 543--571.

\bibitem{Afsharijoo_2026}
P. Afsharijoo, P. D. Gonz\'{a}lez P\'{e}rez, and H. Mourtada, {\it Partition identities associated with $A_r$-Surface singularities}, arXiv preprint, arXiv:2601.12048, 2026.

\bibitem{Andrews_1998}
G. E. Andrews, {\it The Theory of Partitions}, Cambridge Mathematical Library, Cambridge University Press, Cambridge, 1998. Reprint of the 1976 original.	

\bibitem{Andrews_1974}
G. E. Andrews, { \it An analytic generalization of the Rogers-Ramanujan identities for odd moduli}, Proc. Natl. Acad. Sci. USA 71 (1974), 4082--4085.

\bibitem{Andrews-Baxter_1984}
G. E. Andrews, R. J. Baxter, and P. J. Forrester, {\it Eight-vertex SOS model and generalized Rogers-Ramanujan-type identities}, J. Statist. Phys. 35 (1984), 193--266.

\bibitem{Atiyah_book}
M. F. Atiyah and I. G. Macdonald, {\it Introduction to Commutative Algebra}, Addison-Wesley Publishing Co., Reading, Mass.-London-Don Mills, Ont., 1969.

\bibitem{Baxter_1981}
R. J. Baxter, {\it Rogers-Ramanujan identities in the hard hexagon model}, J. Statist. Phys. 26 (1981), 427--452.

\bibitem{Ono_2008a}
K. Bringmann, K. Ono, and R. Rhoades, {\it Eulerian series as modular forms}, J. Amer. Math. Soc. 21 (2008), 1085--1104.

\bibitem{Mourtada_2013}
C. Bruschek, H. Mourtada, and J. Schepers, {\it Arc spaces and the Rogers-Ramanujan identities}, Ramanujan J. 30 (2013), 9--38.

\bibitem{Coulson_2017}
B. Coulson, S. Kanade, J. Lepowsky, R. McRae, F. Qi, M. C. Russell, and C. Sadowski, {\it A motivated proof of the G\"ollnitz-Gordon-Andrews identities}, Ramanujan J. 42 (2017), 97--129.

\bibitem{Eisenbud}
D. Eisenbud, {\it Commutative Algebra with a View toward Algebraic Geometry}, Graduate Texts in Mathematics, Vol. 150, Springer-Verlag, 1995.

\bibitem{Euler_1748}
L. Euler, {\it Introductio in analysin infinitorum}, Marcum-Michaelem Bousquet, Lausannae, 1748.

\bibitem{Fulman_2000}
J. Fulman, {\it The Rogers-Ramanujan identities, the finite general linear groups, and the Hall-Littlewood polynomials}, Proc. Amer. Math. Soc. 128 (2000), 17--25.
          
\bibitem{Gordon_1961}
B. Gordon, {\it A combinatorial generalization of the Rogers-Ramanujan identities}, Amer. J. Math. 83 (1961), 393--399. 
       
\bibitem{Pfister_book}
G.-M. Greulel and G. Pfister, {\it A Singular Introduction to Commutative Algebra}, Springer-Verlag, Berlin, 2002. With contributions by Olaf Bachmann, Christoph Lossen and Hans Schonemann, With 1 CD-ROM (Windows, Macintosh, and UNIX).

\bibitem{Ono}
M. J. Griffin, K. Ono, and S. O. Warnaar, {\it A framework of Rogers-Ramanujan identities and their arithmetic properties}, Duke Math. J. 165 (2016), no. 8, 1475--1527.

\bibitem{Lepowsky and Zhu 2012} 
J. Lepowsky and M. Zhu, \textit{A motivated proof of Gordon's identities}, Ramanujan J. 29 (2012), 199--211.

\bibitem{MacMahon}
P. A. MacMahon, {\it Combinatory Analysis}, Vol. I and II, Chelsea Publ., New York, 1960.

\bibitem{Mourtada_2025}
H. Mourtada, {\it Hilbert meets Ramanujan: singularity theory and integer partitions}, Bull. Amer. Math. Soc. 62 (2025), no. 1, 93--111.

\bibitem{Ono_2008b}
K. Ono, {\it Unearthing the visions of a master: harmonic Maass forms and number theory}, in Current Developments in Mathematics, 2008, pp. 347--454, Int. Press, Somerville, MA, 2009.

\bibitem{Richmond_1981}
B. Richmond and G. Szekeres, {\it Some formulas related to dilogarithms, the zeta function and the Andrews-Gordon identities}, J. Austral. Math. Soc. Ser. A 31 (1981), 362--373.

\bibitem{Rogers_1894}
L. J. Rogers, {\it Second Memoir on the Expansion of certain Infinite Products}, Proceedings of the London Mathematical Society 25 (1893/94), 318--343.

\bibitem{Schur}
I. Schur, {\it Ein Beitrag zur additiven Zahlentheorie und zur Theorie der Kettenbr\"{u}che}, S.-B. Preuss. Akad. Wiss. Phys. Math. Klasse (1917), 302–321.

\end{thebibliography}
\end{document}